\newtheorem{thm}{Theorem}[section]
\newtheorem{cor}[thm]{Corollary}
\newtheorem{lem}[thm]{Lemma}
\theoremstyle{definition}
\newcommand{\scr}[1]{\mathscr #1}
\definecolor{wco}{rgb}{0.5,0.2,0.3}
\numberwithin{equation}{section} \theoremstyle{remark}
\newcommand{\ua}{\uparrow}
\title{{\bf    Integration by Parts formula for SPDEs with Multiplicative Noise and its Applications}\footnote{Supported in
 part by  NNSFC(11131003, 11431014), the 985 project and the Laboratory of Mathematical and  Complex Systems.} }
\author{
{\bf    Xing Huang $^{a)}$, Shao-Qin Zhang $^{b)}$, Li-Xia Liu $^{a)}$ }\\
\footnotesize{ a)School of Mathematical Sciences,
Beijing Normal
University, Beijing 100875, China,}\\
\footnotesize{  XingHuang@mail.bnu.edu.cn, lixialiu1@126.com}\\
 \footnotesize{ b)School of Statistics and Mathematics, Central University of Finance and Economics, Beijing 100081, China, }\\
\footnotesize{  zhangsq@cufe.edu.cn }}
\begin{document}
\allowdisplaybreaks
\def\R{\mathbb R}  \def\ff{\frac} \def\ss{\sqrt} \def\B{\mathbf
B}
\def\N{\mathbb N} \def\kk{\kappa} \def\m{{\bf m}}
\def\ee{\varepsilon}\def\ddd{D^*}
\def\dd{\delta} \def\DD{\Delta} \def\vv{\varepsilon} \def\rr{\rho}
\def\<{\langle} \def\>{\rangle} \def\GG{\Gamma} \def\gg{\gamma}
  \def\nn{\nabla} \def\pp{\partial} \def\E{\mathbb E}
\def\d{\text{\rm{d}}} \def\bb{\beta} \def\aa{\alpha} \def\D{\scr D}
  \def\si{\sigma} \def\ess{\text{\rm{ess}}}
\def\beg{\begin} \def\beq{\begin{equation}}  \def\F{\scr F}
\def\Ric{\text{\rm{Ric}}} \def\Hess{\text{\rm{Hess}}}
\def\e{\text{\rm{e}}} \def\ua{\underline a} \def\OO{\Omega}  \def\oo{\omega}
 \def\tt{\tilde} \def\Ric{\text{\rm{Ric}}}
\def\cut{\text{\rm{cut}}} \def\P{\mathbb P} \def\ifn{I_n(f^{\bigotimes n})}
\def\C{\scr C}      \def\aaa{\mathbf{r}}     \def\r{r}
\def\gap{\text{\rm{gap}}} \def\prr{\pi_{{\bf m},\varrho}}  \def\r{\mathbf r}
\def\Z{\mathbb Z} \def\vrr{\varrho} \def\ll{\lambda}
\def\L{\scr L}\def\Tt{\tt} \def\TT{\tt}\def\II{\mathbb I}\def\X{\scr X}
\def\i{{\rm in}}\def\Sect{{\rm Sect}}  \def\H{\mathbb H}
\def\M{\scr M}\def\Q{\mathbb Q} \def\texto{\text{o}} \def\LL{\Lambda}
\def\Rank{{\rm Rank}} \def\B{\scr B} \def\i{{\rm i}} \def\HR{\hat{\R}^d}
\def\to{\rightarrow}\def\l{\ell}\def\iint{\int}
\def\EE{\scr E}\def\no{\nonumber}
\def\A{\scr A}\def\V{\mathbb V}\def\osc{{\rm osc}}\def\H{\scr H}
\def\BB{\scr B}\def\Ent{{\rm Ent}}

\maketitle

\begin{abstract} By using the Malliavin calculus, the Driver-type integration by parts formula is established for the semigroup associated to to SPDEs with Multiplicative Noise. Moreover, estimates on the density of heat kernel w.r.t. Lebesgue measure are obtained in finite dimension case.
\end{abstract} \noindent
 AMS subject Classification:\  60H155, 60B10.   \\
\noindent
 Keywords: Integration by parts formula, Multiplicative noise, Stochastic partial differential equations, Malliavin calculus.
 \vskip 2cm

\section{Introduction}
A significant application of the Malliavin calculus (\cite{M,N}) is to describe the density of a Wiener
functional using the integration by parts formula. In 1997, Driver \cite{D} established the
following integration by parts formula for the heat semigroup $P_t$ on a compact Riemannian
manifold $M$:
\begin{equation}\label{1.1}
P_t(\nabla_Z f)=\mathbb{E}(f(X_t)N_t),\ \ f\in C^1(M), Z\in\X,
\end{equation}
where $\X$ is is the set of all smooth vector fields on $M$, and $N_t$ is a random variable depending
on $Z$ and the curvature tensor. This formula has many applications. For example, we are able to characterize the derivative w.r.t. the second variable $y$ of the heat kernel $p_t(x, y)$, see \cite{W} for a study on integration by parts formulas and applications for SDEs and SPDEs driven by Wiener processes. Moreover, if $N_t$ is exponential integrable, \eqref{1.1} implies the shift Harnack inequality, see also \cite{W} for details.

So far, there are many results on the Driver-type integration by parts formula for SDEs or SPDEs.
The backward coupling method developed in \cite{W} has been used in \cite{F,SQZ} for SDEs driven by fractional Brownian motions and SPDEs driven by Wiener processes

Recently, using a finite many jumps approximation and Malliavin calculus, Wang obtains integration by parts formula for SDEs and SPDEs with additive noise driven by subordinated Brownian motion, see \cite{W1,W2}.

However, all the above results are considered in additive noise case. The aim of this paper is to derive the integration by parts formula for SPDEs with multiplicative noise by Malliavin calculus and to
derive estimates on the derivatives of heat kernel.

The main difficulty in obtaining the integration by parts formula is to give a representation of $D_{h_k}J_T$ (see the proof of Theorem \ref{T2.2}). Unfortunately, in multiplicative noise case, the equations for $D_{h_k}J_T$ is so sophisticated that the Duhamel's formula used in the additive noise case is unavailable. Instead, we applying Lemma \ref{L3.1} which is crucial in the proof of the main results.

Consider the following SPDE on a separable Hilbert spaces $(\mathbb{H},\langle,\rangle,|\cdot|)$ :
\beq \label{1.2}
\d X_{t}= AX_{t}\d t+b_{t}(X_{t})\d t+\sigma_{t}(X_{t})\d W_{t},\ \ X_0=x\in\mathbb{H},
\end{equation}
where $b: [0,\infty)\times \mathbb{H}\to \mathbb{H}$ are measurable locally bounded (i.e. bounded on bounded sets), and $\sigma: [0,\infty)\times \mathbb{H}\to \L(\mathbb{H})$ is measurable, where $\L(\mathbb{H})$ is the space of bounded linear operators on $\mathbb{H}$ equipped with the operator norm $\|\cdot\|$. Moreover,

\beg{enumerate}
\item[(i)] $W$ is a cylindrical Brownian motion on $\mathbb{H}$ with respect to a complete filtration  probability space $(\OO, \F, \{\F_{t}\}_{t\ge 0}, \P)$. More precisely, $W=\sum_{n=1}^{\infty}{w^{n}e_{n}}$ for a sequence of independent one dimensional Brownian motions $\{w^{n}\}_{n\geq 1}$ with respect to $(\OO, \F,
\{\F_{t}\}_{t\ge 0}, \P)$, where $\{e_{n}\}_{n\geq 1}$ is an orthonormal basis on $\mathbb{H}$.

\item[(ii)] $(A,\D(A))$ is a linear operator generating a $C_{0}$-contraction semigroup $\e^{At}$ such that
\beg{equation}\beg{split}\label{1.3}
\int_{0}^{T}\|\e^{At}\|_{\mathrm{HS}}^{2}\d t<\infty, \ \ T>0,
\end{split}\end{equation}
where $\|\cdot\|_{\mathrm{HS}}$ is the Hilbert-Schmidt norm.

\item[(iii)] There exists a non-negative function $K\in C([0,\infty),[0,\infty))$ such that
\beg{equation}\beg{split}\label{1.4}
\|\nabla_v b_{s}(x)\|\vee\|\nabla_v \sigma_{s}(x)\|_{\mathrm{HS}}\leq K(s)|v|, \ \ s\geq0, x, v\in\mathbb{H}.
\end{split}\end{equation}
\end{enumerate}
Then the equation \eqref{1.2} has a unique mild solution $X_{t}(x)$, and the associated Markov semigroup $P_{t}$ is defined as follows:
\beg{equation*}
P_{t}f(x):=\mathbb{E}f(X_{t}(x)),\ \  f\in \B_{b}(\mathbb{H}), t\geq 0, x\in\mathbb{H}.
\end{equation*}
Since for any $t\geq 0$, $\mathrm{Ker}(\e^{At})={0}$, the inverse operator $\e^{-At}:\mathrm{Im}(\e^{At})\to\mathbb{H}$ is well defined. To establish the integration by parts formula, we need the following assumptions:
\beg{enumerate}
\item[(H1)] For any $(t,x)\in [0,\infty)\times\mathbb{H}$, $b_{t}, \sigma_{t}\in C^{2}(\mathbb{H})$, and  there holds $\nabla b_{t}(x):\mathrm{Im}(\e^{At})\to \mathrm{Im}(\e^{At})$, $\nabla \sigma_{t}(x):\mathrm{Im}(\e^{At})\to \L(\mathbb{H},\mathrm{Im}(\e^{At}))$.
    Let
\beg{equation}\label{1.5}\beg{split}
&B_{t}(x):=\e^{-At}\nabla b_{t}(x)\e^{At},\ \ \Sigma_{t}(x):=\e^{-At}\nabla \sigma_{t}(x)\e^{At},\\
&\sigma_{t}^{(k)}(x):=\sigma_{t}(x)e_{k},\ \ \Sigma_{t}^{(k)}(x):=\e^{-At}\nabla \sigma_{t}^{(k)}(x)\e^{At},
\ \ k\geq 1,
(t,x)\in [0,\infty)\times\mathbb{H}.
\end{split}\end{equation}
    Assume
\beg{equation}\label{1.6}
\left\|B_{t}(x)\right\|\vee\left\{\sum_{k=1}^{\infty} \left\|\Sigma_{t}^{(k)}(x)\right\|^2\right\}^{\frac{1}{2}} \vee\sum_{k=1}^{\infty}\left\|\Sigma_{t}^{(k)}(x)\right\|\leq K_{1}(t), \ \ t\geq0, x\in\mathbb{H},
\end{equation}
and
\beg{equation}\label{1.7}
\left\|\nabla B_{t}(x)\right\|\vee\left\{\sum_{k=1}^{\infty}\left\|\nabla\Sigma_{t}^{(k)}(x)\right\| ^2\right\}^{\frac{1}{2}}\leq K_{2}(t),\ \  t\geq0, x\in\mathbb{H}
\end{equation}
for two increasing functions $K_{1}$, $K_{2}\in C([0,\infty),[0,\infty))$.

\item[(H2)] $\sigma$ is invertible. Moreover, there exist a strictly positive increasing function $\lambda$, $\lambda\in C([0,\infty),(0,\infty))$ such that
\beg{equation}\label{1.8}
\left\|\sigma^{-1}_{t}(x)\right\|\leq\lambda(t),\ \  t\geq0,x\in\mathbb{H}.
\end{equation}
\end{enumerate}

(H2) is a standard non-degenerate assumption, while (H1) comes from \cite{W2}, which means that the interaction between far away directions are weak enough. For example, let $0< \lambda_{1}\leq \lambda_{2}\leq\cdots\cdots$ be all eigenvalues of $-A$ counting multiplicities, and $\{e_{k}\}_{k\geq 1}$ are the corresponding eigenbasis. (H1) holds if
\beg{equation*}\beg{split}
&|\langle \nabla_{e_i}b_t, e_j\rangle|\leq K_1(t)\e^{-t|\lambda_i-\lambda_j|}, \\
&|\langle \nabla\nabla_{e_i}b_t, e_j\rangle|\leq K_2(t)\e^{-t|\lambda_i-\lambda_j|},\\
&\sum_{k=1}^{\infty}|\langle \nabla_{e_i}\sigma^{k}_t, e_j\rangle|^2\leq K^2_1(t)\e^{-2t|\lambda_i-\lambda_j|}, \\
&\sum_{k=1}^{\infty}|\langle \nabla\nabla_{e_i}\sigma^{k}_t, e_j\rangle|^2\leq K^2_2(t)\e^{-2t|\lambda_i-\lambda_j|},
\ \ t\geq 0, i, j\geq 1.
\end{split}\end{equation*}

In addition, for simplicity, set
\beg{equation}\label{1.9}\beg{split}
&(\nabla B_{t})(x)(u,v):=\nabla_v \left(B_{t}(\cdot)u\right)(x),\\
&\left(\nabla\Sigma_{t}^{(k)}\right)(x)(u,v):=\nabla_v \left(\Sigma_{t}^{(k)}(\cdot)u\right)(x),
\ \ k\geq 1,
(t,x,u,v)\in [0,\infty)\times\mathbb{H}^3.
\end{split}\end{equation}
\section{Main results}
To state our main results, for any $s\geq0$, we introduce $\L(\mathbb{H})$-valued processes $(J_{s,t})_{t\geq s}$ and $(J^{A}_{s,t})_{t\geq s}$, which solve the following SDEs:
\beq\label{2.1}\beg{split}
\d J_{s,t}=B_{t}(X_{t})J_{s,t}\d t+\sum_{k=1}^{\infty}\Sigma_{t}^{(k)}(X_{t})J_{s,t}\d w^{k}_{t}, \ \ J_{s,s}=I
\end{split}\end{equation}
and
\beq\label{2.2}\beg{split}
&\d J^{A}_{s,t}=(A+\nabla b_{t}(X_{t}))J^{A}_{s,t}\d t+\sum_{k=1}^{\infty}\nabla\sigma_{t}^{(k)}(X_{t})J^{A}_{s,t}\d w^{k}_{t}, \ \ J^{A}_{s,s}=I.
\end{split}\end{equation}
From \eqref{1.7}, \eqref{2.1} and \eqref{2.2} are well defined. Set $J_{t}=J_{0,t}$ and $J^{A}_{t}=J^{A}_{0,t}$, then $J_t^{A}=\e^{At}J_t$.

Firstly, we use It\^{o}'s formula to derive the equation for $\{J_{t}^{-1}\}_{t\geq 0}$.

Assume
\beg{equation*}
\d J_{t}^{-1}=G_t\d t+\sum_{k=1}^{\infty}H^{(k)}_t\d  w^k_{t}, \ \ J_{0}^{-1}=I,
\end{equation*}
where $G$ and $H$ are to be determined.

Applying It\^{o}'s formula (see \cite[section 23]{H}), combining with \eqref{2.1}, we have
\beg{equation*}\beg{split}
\d J_{t}J_{t}^{-1}&= B_{t}( X_{t})J_{t}J_{t}^{-1}\d t+\sum_{k=1}^{\infty}\Sigma^{(k)}_{t}( X_{t})J_{t}J_{t}^{-1}\d  w^k_{t}\\
&+J_{t}G_t\d t+\sum_{k=1}^{\infty}J_{t}H^{(k)}_t\d  w^k_{t}+\sum_{k=1}^{\infty}\Sigma^{(k)}_{t}( X_{t})J_{t}H^{(k)}_t\d t.
\end{split}\end{equation*}
Since $\d J_{t}J_{t}^{-1}=0$, it holds that
\beg{equation*}\beg{split}
&G_t=-J_{t}^{-1}\left\{ B_{t}( X_{t})-\sum_{k=1}^{\infty} \left(\Sigma^{(k)}_{t}( X_{t})\right)^{2}\right\}, \quad H^{k}_t=-J_{t}^{-1}\Sigma^{(k)}_{t}( X_{t}), \ \ k\geq 1.
\end{split}\end{equation*}
Thus, we obtain
\beg{equation}\label{2.3}\beg{split}
\d J_{t}^{-1}&=-J_{t}^{-1}\left\{ B_{t}( X_{t})-\sum_{k=1}^{\infty} \left(\Sigma^{(k)}_{t}( X_{t})\right)^{2}\right\}\d t-\sum_{k=1}^{\infty}J_{t}^{-1}\Sigma^{(k)}_{t}( X_{t})\d  w^k_{t},\quad J_{0}^{-1}=I.
\end{split}\end{equation}
\paragraph {Remark 2.1} Since the inverse of $J_t^A$ does not exist in infinite dimension, we will use $J_t^{-1}$ to construct $h$ in \eqref{3.5} in stead of $(J_t^A)^{-1}$ in finite dimension, seeing  details in the proof of Theorem 2.1. Moreover, to ensure the existence of $J_t^{-1}$, we assume (H1).

We have the following estimates for $J_t$ and $J_t^{-1}$.
\begin{lem}\label{L2.1} Assume (H1) and (H2). Then for any $x\in\mathbb{H}$, $t\geq 0$, $p\geq 2$, it holds that
\beg{equation}\label{2.4}\beg{split}
\sup_{s\in[0,t]}\mathbb{E}\|J_{s}\|^{p}\leq 3^{p-1}\exp\left\{3^{p-1}\left(t^{p-1}+t^{\frac{p}{2}-1}\right)K^{p}_{1}(t)\right\}
\end{split}\end{equation}
and
\beg{equation}\label{2.5}\beg{split}
\sup_{s\in[0,t]}\mathbb{E}\|J_{s}^{-1}\|^{p}\leq 3^{p-1}\exp\left\{3^{p-1} \left[t^{p-1}\left(K_{1}(t)+K^{2}_{1}(t)\right)^{p}+t^{\frac{p}{2}-1}K_{1}^{p}(t)\right]\right\}.
\end{split}\end{equation}
\end{lem}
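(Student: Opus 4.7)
The plan is to derive a single closed Gronwall-type integral inequality for $\E\|J_s\|^p$ (and analogously for $\E\|J_s^{-1}\|^p$) directly from the SDE \eqref{2.1} (resp.\ \eqref{2.3}), by passing to the operator norm and then using standard estimates for the drift and stochastic integrals.

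First I would rewrite \eqref{2.1} in integral form, take the operator norm on both sides, and apply the elementary inequality $(a+b+c)^p\le 3^{p-1}(a^p+b^p+c^p)$ to obtain
\[
\|J_t\|^p \le 3^{p-1}\Bigl(1 + \Bigl(\int_0^t\|B_s(X_s)\|\,\|J_s\|\,\d s\Bigr)^{p} + \Bigl\|\sum_{k=1}^{\infty}\int_0^t\Sigma_s^{(k)}(X_s)J_s\,\d w^k_s\Bigr\|^{p}\Bigr).
\]
This is the source of the prefactor $3^{p-1}$ in \eqref{2.4}. The constant ``$1$'' contributes directly. For the drift, H\"older's inequality in time with exponents $(\tfrac{p}{p-1},p)$ combined with $\|B_s(X_s)\|\le K_1(s)\le K_1(t)$ from \eqref{1.6} produces $t^{p-1}K_1(t)^p\int_0^t\|J_s\|^p\,\d s$. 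For the stochastic integral I would apply the Burkholder--Davis--Gundy inequality for cylindrical $\mathbb{H}$-valued stochastic integrals, then use $\sum_k\|\Sigma_s^{(k)}(X_s)J_s\|^2\le K_1(s)^2\|J_s\|^2$ and H\"older with exponents $(\tfrac{p}{p-2},\tfrac{p}{2})$ in the time integral, producing a constant multiple of $t^{p/2-1}K_1(t)^p\int_0^t\E\|J_s\|^p\,\d s$.

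Taking expectations, collecting terms, and using that $K_1$ is increasing and $s\mapsto s^{p-1}+s^{p/2-1}$ is nondecreasing on $[0,t]$ for $p\ge 2$, the function $\phi(s):=\E\|J_s\|^p$ satisfies a linear Gronwall inequality on $[0,t]$ which closes to give \eqref{2.4} after taking the supremum in $s\in[0,t]$.

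For \eqref{2.5} the same three-step template applies to the SDE \eqref{2.3}: the drift coefficient of $J_t^{-1}$ has operator norm controlled by $\|B_t\|+\sum_k\|\Sigma_t^{(k)}\|^2\le K_1(t)+K_1(t)^2$ by \eqref{1.6}, which is exactly the combination $K_1+K_1^2$ that appears inside the exponential in \eqref{2.5}, while the diffusion coefficient still satisfies $\sum_k\|J_s^{-1}\Sigma_s^{(k)}\|^2\le K_1(s)^2\|J_s^{-1}\|^2$, reproducing the $K_1^p$ factor on the $t^{p/2-1}$ term. The main obstacle is to absorb the BDG constant $C_p$ into the cleanly stated factor $3^{p-1}$; this likely requires applying It\^o's formula directly to $|J_sv|^p$ for unit vectors $v\in\mathbb{H}$, so that the It\^o correction from the diffusion is tracked explicitly rather than bounded by a universal BDG constant. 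Once the constants are sorted out, the remaining steps are routine linear-SDE estimates.
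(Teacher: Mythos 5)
Your proposal follows essentially the same route as the paper's proof: write \eqref{2.1} (resp.\ \eqref{2.3}) in integral form, use $(a+b+c)^p\le 3^{p-1}(a^p+b^p+c^p)$ to separate the three contributions (source of the $3^{p-1}$), control the drift by H\"older in time (giving $t^{p-1}$), control the martingale term by Burkholder--Davis--Gundy followed by H\"older (giving $t^{p/2-1}$), invoke the bounds in \eqref{1.6}, and close with Gronwall; your reading of the drift bound $K_1+K_1^2$ for $J^{-1}$ out of \eqref{2.3} is also exactly what produces the $(K_1+K_1^2)^p$ in \eqref{2.5}.

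The concern you flag about the BDG constant is real, but it is a defect in the paper's own proof rather than a gap in yours: the paper's displayed chain passes from $\E\bigl\|\int_0^t\sum_k\Sigma_s^{(k)}(X_s)J_s\,\d w^k_s\bigr\|^p$ directly to $t^{p/2-1}\E\int_0^t\bigl(\sum_k\|\Sigma_s^{(k)}(X_s)\|^2\bigr)^{p/2}\|J_s\|^p\,\d s$ with no $C_p$, i.e.\ it silently takes the BDG constant to be $1$. Carrying $C_p$ honestly changes the explicit numerical constants in \eqref{2.4}--\eqref{2.5} (and hence $\beta_1,\beta_2$ in Corollary~\ref{C2.3}) by a $p$-dependent factor, but does not affect finiteness of the moments or the structure of any downstream estimate. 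Your proposed remedy of applying It\^o's formula directly to $|J_sv|^p$ is a legitimate alternative way to avoid a universal BDG constant, but note that the It\^o correction for $|\cdot|^p$ produces coefficients such as $p$ and $\tfrac{p(p-1)}{2}$ rather than the clean $3^{p-1}$, so it would yield a differently shaped (though still Gronwall-closable) bound, not the paper's stated one.
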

\begin{proof} By Burkerholder-Davis-Gundy inequality, it follows from \eqref{2.1} that
\beg{equation*}\beg{split}
\mathbb{E}\left\|J_{t}\right\|^{p}&\leq 3^{p-1}+3^{p-1}\mathbb{E}\left\|\int_{0}^{t}B_s(X_s)J_{s}\d s\right\|^{p}+3^{p-1}\mathbb{E}\left\|\int_{0}^{t}\sum_{k=1}^{\infty}\Sigma_{s}^{(k)}(X_{s})J_{s}\d w^{k}_{s}\right\|^{p}\\
&\leq 3^{p-1}+3^{p-1}t^{p-1}\mathbb{E}\int_{0}^{t}\|B_s(X_s)\|^p\|J_{s}\|^{p}\d s\\
&+3^{p-1}t^{\frac{p}{2}-1}\mathbb{E}\int_{0}^{t}\left(\sum_{k=1}^{\infty} \|\Sigma_{s}^{(k)}(X_{s})\|^2\right)^{\frac{p}{2}}\|J_{s}\|^p\d s\\
&\leq 3^{p-1}+3^{p-1}\left[t^{p-1}K^{p}_{1}(t)+t^{\frac{p}{2}-1}K_{1}^{p}(t)\right] \int_{0}^{t}\mathbb{E}\|J_{s}\|^{p}\d s.
\end{split}\end{equation*}
Applying Gronwall inequality, we have
\beg{equation}\label{2.6}\beg{split}
&\sup_{s\in[0,t]}\mathbb{E}\left\|J_{s}\right\|^{p}\leq 3^{p-1}\exp\left\{3^{p-1}\left(t^{p-1}+t^{\frac{p}{2}-1}\right)K^{p}_{1}(t)\right\}.
\end{split}\end{equation}
Similarly, \eqref{2.3} yields that
\beg{equation}\label{2.7}\beg{split}
&\sup_{s\in[0,t]}\mathbb{E}\left\|J_{s}^{-1}\right\|^{p}\leq 3^{p-1}\exp\left\{3^{p-1} \left[t^{p-1}\left(K_{1}(t)+K^{2}_{1}(t)\right)^{p}+t^{\frac{p}{2}-1}K_{1}^{p}(t)\right]\right\}.
\end{split}\end{equation}
Thus, we finish the proof.
\end{proof}
The main result is the following.
\beg{thm}\label{T2.2} Assume (H1) and (H2), then the integration formula by parts holds, i.e.
\beq\label{2.8}\beg{split}
P_{T}(\nabla_{\e^{AT}v}f)=\frac{1}{T}\mathbb{E}\{f(X_{T})M_{T}^{v}\}, \ \ v\in\mathbb{H}, f\in C_{b}^{1}(\mathbb{H})
\end{split}\end{equation}
holds for
\beg{equation}\label{2.9}\beg{split}
M_{T}^{v}:&=\left\langle\int_{0}^{T}\left[\sigma^{-1}_t(X_{t})J^{A}_{t}\right]^{\ast}\d W_{t},J_{T}^{-1}v\right\rangle\\
&+\int_{0}^{T}\mathrm{Tr}\left\{t J_{t}^{-1}\left[
(\nabla B_{t})(X_{t})\left(J_{t}J_{T}^{-1}v,J^{A}_{t}\right)\right]\right\}\d t\\
&+\sum_{k=1}^{\infty}\left\langle\left[\int_{0}^{T}J_{t}^{-1}\sum_{j=1}^{\infty}\left(\nabla_{tJ_{t}^{A}e_{k}} \Sigma^{(j)}_{t}\right)(X_{t})J_{t}\d w^{j}_{t}\right]J_{T}^{-1}v, e_{k}\right\rangle\\
&+\int_{0}^{T}\mathrm{Tr}\left\{J_{t}^{-1}\left[\Sigma_{t}(X_{t})J_{t}J_{T}^{-1}v\right]\left(\sigma_t^{\ast} (\sigma_{t}\sigma_t^{\ast})^{-1}\right)(X_{t})J^{A}_{t} \right\}\d t\\
&-\int_{0}^{T}\mathrm{Tr}\left\{tJ_{t}^{-1}\sum_{j=1}^{\infty}\Sigma^{(j)}_{t}(X_{t})\left[\left(\nabla \Sigma^{(j)}_{t}\right)(X_{t})\left(J_{t}J_{T}^{-1}v,J^{A}_{t}\right)\right]\right\}\d t.
\end{split}\end{equation}
\end{thm}
\paragraph {Remark 2.2} Every term in \eqref{2.9} is well defined by \eqref{1.3}, (H1), (H2), Lemma \ref{L2.1}.

This result extends \cite[Theorem 5.1]{W} where $\sigma$ only depends on time, see also \cite[Theorem 3.2.4(1)]{Wbook}. When $\mathbb{H} = \mathbb{R}^d$
is finite-dimensional, we may take $A = 0$ and so that Theorem \ref{T2.2} with $J^A = J $ covers
the result in \cite[Theorem 2.1]{W}. In this case, according to \cite{W}, the integration by parts formula
implies that $P_T$ has a density $ p_T(x, y)$ with respect to the Lebesgue measure, which is
differentiable in $y$ with
\beg{equation}\beg{split}\label{2.10}
\nabla_v\log p_T(x,\cdot)(y)=-\mathbb{E}\left(M^v_T|X_T(x)=y\right), \ \ x,v\in \mathbb{R}^d.
\end{split}\end{equation}

The next corollary is an application of Theorem \ref{T2.2} for finite dimension case.
\begin{cor}\label{C2.3} Assume (H1) and (H2), $\mathbb{H}=\mathbb{R}^d$, $A=0$. Let
\beg{equation*}\beg{split}
&\beta_1(p,t):=3^{p-1}\exp\left\{3^{p-1}\left(t^{p-1}+t^{\frac{p}{2}-1}\right)K^{p}_{1}(t)\right\},\\
&\beta_2(p,t):=3^{p-1}\exp\left\{3^{p-1} \left[t^{p-1}\left(K_{1}(t)+K^{2}_{1}(t)\right)^{p}+t^{\frac{p}{2}-1}K_{1}^{p}(t)\right]\right\}.
\end{split}\end{equation*}
Then the following assertions hold.
\beg{enumerate}
\item[(1)] For any $T>0$, $v\in\mathbb{R}^d$,
\begin{equation*}\beg{split}
&\|P_{T}(\nabla_v f)\|_{\infty}\leq |v|\|f\|_{\infty}\frac{\Gamma_T}{T},\ \ f\in C^1_b(\mathbb{R}^d),\\
&\int_{\mathbb{R}^d}|\nabla_v\log p_T(x,\cdot)|(y)p_T(x,y)\d y\leq |v|\frac{\Gamma_T}{T},\ \ x\in\mathbb{R}^d,
\end{split}\end{equation*}
where
\beg{equation*}\beg{split}
\Gamma_T&= \lambda(T)\sqrt{d T} \left(\beta_1(2,T)\beta_2(2,T)\right)^{\frac{1}{2}}\\
&+dT^2 K_2(T)\left(\beta_2(4,T)\beta_1(8,T)\beta_2^2(2,T)\right)^\frac{1}{4}\\
&+\left(d T^2 K_2(T)+d T \lambda_2(T)K_1(T)+d T^2 K_1(T) K_2(T)\right)\left(\beta_1(4,T)\beta_2(4,T)\right)^\frac{1}{2}.
\end{split}\end{equation*}
\item[(2)] For any $p > 1$, $T > 0$, it holds that
\begin{equation*}\beg{split}
&|P_{T}(\nabla_v f)|\leq \frac{|v|}{T}(P_{T}|f|^p)^{\frac{1}{p}}\left\{5^{[\frac{p}{p-1}]\vee2-1} \Gamma_{T,[\frac{p}{p-1}]\vee2}\right\}^{[\frac{p-1}{p}]\wedge\frac{1}{2}},\\
&\int_{\mathbb{R}^d}|\nabla_v\log p_T(x,\cdot)|^{\frac{p}{p-1}}(y)p_T(x,y)\d y\leq\frac{|v|}{T}\left\{5^{[\frac{p}{p-1}]\vee2-1} \Gamma_{T,[\frac{p}{p-1}]\vee2}\right\}^{[\frac{p-1}{p}]\wedge\frac{1}{2}},\ \ x\in\mathbb{R}^d,
\end{split}\end{equation*}
where
\beg{equation*}\beg{split}
\Gamma_{T,q}&= C(q)\lambda^q(T)d^{\frac{q}{2}} T^{\frac{q}{2}} \{\beta_1(2q,T)\beta_2(2q,T)\}^{\frac{1}{2}}\\
&+C(q)d^{q}  T^{\frac{3q+1}{2}} K_2^q(T)\left(\beta_2(4q,T)\beta_1(8q,T)\beta_2^2(2q,T)\right)^\frac{1}{4}\\
&+\left(d^q T^{2q} K^q_2(T)+d ^{q}T^{q} \lambda^q_2(T)K^q_1(T)+d ^{q}T^{2q}K^q_1(T) K^q_2(T)\right)\left(\beta_1(4q,T)\beta_2(4q,T)\right)^\frac{1}{2},
\end{split}\end{equation*}
and $C(q)$ is a nonnegetive constant only depending on $q\geq 2$.
\end{enumerate}
\end{cor}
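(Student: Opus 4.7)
Since $A=0$ and $\mathbb{H}=\mathbb{R}^d$, Theorem \ref{T2.2} applies with $\e^{AT}=I$ and $J^A_t=J_t$, so \eqref{2.8} reduces to
\beg{equation*}
P_T(\nabla_v f)=\ff{1}{T}\mathbb{E}\{f(X_T)M_T^v\},\qquad v\in\mathbb{R}^d.
\end{equation*}
Both assertions then follow once one controls $\mathbb{E}|M_T^v|^q$ for the appropriate $q\ge 1$. The plan is to bound each of the five summands in \eqref{2.9} separately, combine them through the power-mean inequality $|M_T^v|^q\le 5^{q-1}\sum_{i=1}^5|\mathrm{Term}_i|^q$, and apply Lemma \ref{L2.1} to convert $L^r$-moments of $\|J_t\|$ and $\|J_t^{-1}\|$ into the quantities $\beta_1(r,T)$ and $\beta_2(r,T)$.

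For assertion (1), start from $|P_T(\nabla_v f)|\le\ff{\|f\|_\infty}{T}\mathbb{E}|M_T^v|$. For the first summand, the Burkholder--Davis--Gundy inequality together with the Hilbert--Schmidt estimate $\|\sigma_t^{-1}J_t\|_{\mathrm{HS}}^2\le d\lambda^2(t)\|J_t\|^2$, followed by Cauchy--Schwarz on the pairing with $J_T^{-1}v$, produces the first line of $\Gamma_T$ via $(\beta_1(2,T)\beta_2(2,T))^{1/2}$. For the third summand, BDG applied to the $\mathbb{R}^d$-valued stochastic integral together with the bound on $\nabla\Sigma^{(j)}$ from (H1) reduces matters to estimating $\mathbb{E}[\|J_t^{-1}\|\,\|J_t\|^2\,\|J_T^{-1}\|]$ (up to constants involving $t^2, d, K_2^2(t)$); Hölder's inequality with weights $(4,4,2)$ on the three factors then yields the second line $(\beta_2(4,T)\beta_1(8,T)\beta_2^2(2,T))^{1/4}$. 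For the three trace summands (terms 2, 4, 5), use $|\mathrm{Tr}(M)|\le d\|M\|$ and the operator bounds from (H1)--(H2), then Hölder with equal weights $(4,4,4,4)$ on the same four-factor expectation (splitting $\|J_t\|^2$ as two factors) to obtain the third line $(\beta_1(4,T)\beta_2(4,T))^{1/2}$. Summing the five contributions gives $\mathbb{E}|M_T^v|\le|v|\Gamma_T$. The bound on $\int|\nabla_v\log p_T|\,p_T\,\d y$ follows from \eqref{2.10} and the conditional Jensen inequality $|\mathbb{E}(M_T^v\mid X_T)|\le\mathbb{E}(|M_T^v|\mid X_T)$.

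For assertion (2), Hölder's inequality with exponents $p$ and $p/(p-1)$ gives
\beg{equation*}
|P_T(\nabla_v f)|\le\ff{1}{T}(P_T|f|^p)^{1/p}\bigl(\mathbb{E}|M_T^v|^{p/(p-1)}\bigr)^{(p-1)/p}.
\end{equation*}
When $p/(p-1)\ge 2$ one repeats the five estimates of part (1) with $q=p/(p-1)$, so that $\beta_1(2,T), \beta_2(2,T), \beta_1(4,T), \beta_2(4,T), \beta_1(8,T)$ are replaced by their counterparts at $2q,4q,8q$, yielding $\Gamma_{T,q}$. When $p/(p-1)<2$, Jensen's inequality permits replacing the $(p/(p-1))$-th moment by the second moment; this is the origin of both $[\cdot]\vee 2$ in the subscript of $\Gamma_{T,q}$ and the compensating $[\cdot]\wedge\tfrac12$ in the outer exponent. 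The log-density bound again follows from \eqref{2.10} and conditional Jensen. The main obstacle is not conceptual but bookkeeping: five summands, each with its own combination of $J_t, J_t^{-1}, J_T^{-1}$ and its own scaling in $t, d, K_1, K_2, \lambda$, and each requiring a specific Hölder splitting---equal weights for the trace terms, unequal weights $(4,4,2)$ for the $\nabla\Sigma$ stochastic integral---to reproduce exactly the combinations of $\beta_1$ and $\beta_2$ prescribed in $\Gamma_T$ and $\Gamma_{T,q}$.
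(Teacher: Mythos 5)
Your proposal matches the paper's proof essentially exactly: decompose $M_T^v=\sum_{i=1}^{5}\Theta_i$ with $J^A_t=J_t$, estimate each $\Theta_i$ via It\^o isometry (or BDG for higher moments), the Hilbert--Schmidt bound $\|\sigma_t^{-1}J_t\|_{\mathrm{HS}}^2\le d\lambda^2(t)\|J_t\|^2$, $|\mathrm{Tr}(M)|\le d\|M\|$, and the indicated H\"older splittings, then invoke Lemma \ref{L2.1} for $\beta_1,\beta_2$, the power-mean inequality $5^{q-1}\sum|\Theta_i|^q$, and \eqref{2.10} with conditional Jensen for the log-density bounds, handling $p/(p-1)<2$ by Jensen with the $\vee 2$/$\wedge\tfrac12$ adjustment. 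This is the same route and the same bookkeeping as the paper's proof of Corollary \ref{C2.3}.
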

\paragraph {Remark 2.3} From \eqref{2.1} and \eqref{2.3}, it is easy to see that $\mathbb{E}\left(\exp(\delta |M_T^v|)\right)=\infty$ for any $\delta>0$, for $M_T^v$ has the form like $\exp(\exp(X))$, where $X$ is a Gaussian random variable. Thus, it can not yield the shift Harnack inequality by Young's inequality from \eqref{2.8} as in the additive noise case.

The remainder of the paper is organized as follows. In Section 3, we give a proof of Theorem \ref{T2.2}, in Section 4, we prove Corollary 2.3.
\section{Proof of Theorem \ref{T2.2}}
Firstly, we introduce a formula for the solution of a class of semi-linear SDEs on $\L(\mathbb{H},\mathbb{H})$.
\begin{lem}\label{L3.1} Let $\{Y_{t}\}_{t\geq 0}$ solves the following SDE on $\L(\mathbb{H},\mathbb{H})$:
\begin{equation*}\beg{split}
\d Y_{t}=a_t\d t+b_tY_{t}\d t+\sum_{k=1}^{\infty}c^{k}_tY_{t}\d w^{k}_{t}+\sum_{k=1}^{\infty}f^{k}_t\d w^{k}_{t},
\end{split}\end{equation*}
where $a$, $b$, $\{c^{k}\}_{k\geq 1}$, $\{f^{k}\}_{k\geq1}$ are $\L(\mathbb{H},\mathbb{H})$-valued progressive measurable processes satisfying
\begin{equation*}\beg{split}
\int_0^t \mathbb{E}\left(\|a_s\|^{2}+\|b_s\|^{2}\right)\d s+\sum_{k=1}^{\infty}\int_0^t \mathbb{E}\left(\|c^{k}_s\|^{2}+\|f^{k}_s\|^{2}\right)\d s<\infty, \ \ t\geq 0.
\end{split}\end{equation*}
Then $Y_{t}$ satifies
\begin{equation}\label{3.1}\beg{split}
Y_{t}=G_t\left\{Y_{0}+\int_{0}^{t}G^{-1}_{s}a_s\d s+\int_{0}^{t}G^{-1}_{s}\sum_{k=1}^{\infty}f^{k}_s\d w^{k}_{s}-\int_{0}^{t}G^{-1}_{s}\sum_{k=1}^{\infty}c^{k}_sf^{k}_s\d s\right\},\ \ t\geq 0,
\end{split}\end{equation}
where $\{G_{t}\}_{t\geq 0}$ and $\{G_{t}^{-1}\}_{t\geq 0}$ satisfy
\begin{equation*}\beg{split}
&\d G_t=b_tG_t\d t+\sum_{k=1}^{\infty}c^{k}_tG_t\d w^{k}_{t}, \quad G_0=I;\\
&\d G_{t}^{-1}=-G_{t}^{-1}\left( b_t-\sum_{k=1}^{\infty}\left(c^{k}_t\right)^{2}\right)\d t-G_{t}^{-1}\sum_{k=1}^{\infty}c^{k}_t\d w^{k}_{t},\quad G_{0}^{-1}=I.
\end{split}\end{equation*}
\end{lem}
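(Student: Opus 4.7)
The strategy is the standard ``variation of constants'' verification. Define
$$Z_t := Y_0 + \int_0^t G_s^{-1} a_s\,\d s + \int_0^t G_s^{-1}\sum_{k=1}^{\infty} f^k_s\,\d w^k_s - \int_0^t G_s^{-1}\sum_{k=1}^{\infty} c^k_s f^k_s\,\d s,$$
and set $\tilde Y_t := G_t Z_t$. The plan is to compute $\d \tilde Y_t$ by It\^o's product rule, show that $\tilde Y$ satisfies the same linear SDE as $Y$ with the same initial value $\tilde Y_0 = Y_0$, and then invoke pathwise uniqueness for linear SDEs (standard under the stated $L^2$-bounds on $b$ and $c^k$, via Gronwall applied to the difference of two solutions) to conclude $Y_t = \tilde Y_t$, which is exactly \eqref{3.1}.

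Before this, I would pin down the equation for $G_t^{-1}$ stated in the lemma. Writing $\d G_t^{-1} = \alpha_t\,\d t + \sum_k \beta_t^k\,\d w_t^k$ with undetermined coefficients and imposing $\d(G_t G_t^{-1}) = 0$ via It\^o's formula in $\mathscr{L}(\mathbb{H})$ uniquely forces $\alpha_t = -G_t^{-1}\bigl(b_t - \sum_k (c^k_t)^2\bigr)$ and $\beta_t^k = -G_t^{-1} c^k_t$. This is exactly the computation carried out for $J_t^{-1}$ in the paragraph preceding Lemma \ref{L2.1}, and the $L^2$-integrability hypotheses ensure that the relevant stochastic integrals are well defined.

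The central calculation is then
$$\d(G_t Z_t) = (\d G_t)\, Z_t + G_t\,(\d Z_t) + \d[G,Z]_t.$$
The first term produces $b_t(G_t Z_t)\,\d t + \sum_k c^k_t(G_t Z_t)\,\d w^k_t$; the second, using $G_t G_t^{-1} = I$, produces $a_t\,\d t + \sum_k f^k_t\,\d w^k_t - \sum_k c^k_t f^k_t\,\d t$; and the cross-variation produces $\sum_k (c^k_t G_t)(G_t^{-1} f^k_t)\,\d t = \sum_k c^k_t f^k_t\,\d t$, which cancels the subtracted term exactly. Collecting, $\tilde Y$ solves the same SDE as $Y$ with $\tilde Y_0 = Y_0$, and pathwise uniqueness finishes the proof.

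The main obstacle I anticipate is the justification of the cross-variation $\d[G,Z]_t$ in the noncommutative, infinite-dimensional operator setting: the bracket is an $\mathscr{L}(\mathbb{H})$-valued series over $k$, and one must appeal to the square-summability hypothesis $\sum_k \int_0^t \mathbb{E}(\|c^k_s\|^2 + \|f^k_s\|^2)\,\d s < \infty$ to interchange the sum with the It\^o bracket and identify the limit as a finite-variation process in $\mathscr{L}(\mathbb{H})$. Once this interchange is licensed, the algebraic cancellation of the $\sum_k c^k_t f^k_t$ terms is immediate and the conclusion follows.
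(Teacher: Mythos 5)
Your proof is correct and rests on the same core Itô computation as the paper's: the cross-variation term $\d[G,Z]_t=\sum_k c^k_tf^k_t\,\d t$ cancelling the subtracted drift, under the same square-summability hypothesis that licenses interchanging the sum with the bracket.

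The one genuine difference is direction. The paper introduces the integrating factor $F_t:=G_t^{-1}$ directly, computes $\d(F_tY_t)$ by Itô's product rule (where the $F_tb_tY_t$, $F_t c^k_tY_t$, and $F_t(c^k_t)^2Y_t$ terms all cancel), integrates, and multiplies by $G_t=F_t^{-1}$; no uniqueness theorem is needed because the formula is derived rather than guessed. You instead posit the candidate $\tilde Y_t=G_tZ_t$, verify by the same product-rule computation that it solves the given linear SDE with $\tilde Y_0=Y_0$, and then appeal to pathwise uniqueness (Gronwall under the $L^2$ bounds on $b$, $c^k$) to conclude $Y=\tilde Y$. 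Both are correct; the paper's forward derivation is slightly more self-contained since it skips the uniqueness step, whereas your verification route is arguably cleaner to state because it avoids writing down the SDE for the auxiliary process $F_t$ from scratch and leans on the relation $G_tG_t^{-1}=I$ exactly as the paper does for $J_t$, $J_t^{-1}$ before Lemma \ref{L2.1}.
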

\begin{proof} Let $\{F_{t}\}_{t\geq 0}$ (called as integrating factor) solves the following SDE:
\begin{equation*}\beg{split}
\d F_{t}=-F_t b_t\d t-F_t\sum_{k=1}^{\infty}c^{k}_t\d w^{k}_{t}+F_{t}\sum_{k=1}^{\infty}\left(c^{k}_t\right)^{2}\d t,\quad F_{0}=I.
\end{split}\end{equation*}
Then It\^{o} formula yields
\begin{equation*}\beg{split}
\d F_{t}Y_{t}&=(\d F_{t})Y_{t}+F_{t}\d Y_{t}+(\d F_{t})(\d Y_{t})\\
&=F_{t}\left[-b_t\d t-\sum_{k=1}^{\infty}c^{k}_t\d w^{k}_{t}+\sum_{k=1}^{\infty}\left(c^{k}_t\right)^{2}\d t\right]Y_{t}\\
&+F_{t}\left[a_t\d t+b_tY_{t}\d t+\sum_{k=1}^{\infty}c^{k}_tY_{t}\d w^{k}_{t}+\sum_{k=1}^{\infty}f^{k}_t\d w^{k}_{t}\right]\\
&-F_{t}\sum_{k=1}^{\infty}\left(c^{k}_t\right)^{2}Y_{t}\d t-F_{t}\sum_{k=1}^{\infty}c^{k}_tf^{k}_t\d t\\
&=F_{t}a_t\d t+F_{t}\sum_{k=1}^{\infty}f^{k}_t\d w^{k}_{t}-F_{t}\sum_{k=1}^{\infty}c^{k}_tf^{k}_t\d t.
\end{split}\end{equation*}
So we obtain
\begin{equation}\label{3.2}\beg{split}
Y_{t}=F_{t}^{-1}\left\{F_{0}Y_{0}+\int_{0}^{t}F_{s}a_s\d s+\int_{0}^{t}F_{s}\sum_{k=1}^{\infty}f^{k}_s\d w^{k}_{s}-\int_{0}^{t}F_{s}\sum_{k=1}^{\infty}c^{k}_sf^{k}_s\d s\right\},
\end{split}\end{equation}
where $F_{t}^{-1}$ satisfies
\begin{equation}\beg{split}\label{3.3}
\d F_{t}^{-1}=b_tF_{t}^{-1}\d t+\sum_{k=1}^{\infty}c^{k}_tF_{t}^{-1}\d w^{k}_{t}, \quad F_{0}^{-1}=I.
\end{split}\end{equation}
Let $G_t:=F_{t}^{-1}$, we obtain \eqref{3.1}.
\end{proof}

\begin{proof} [Proof of Theorem 1.1] We will use Malliavin calculus to derive the integration by parts formula, see for instance \cite{N,W,W1}. For $( W_{t})_{t\in[0,T]}$, let $(D,\D(D))$ be the Malliavin gradient, and let $(D^{\ast},\D(D^{\ast}))$ be its adjoint operator (i.e. the Malliavin divergence). Since $(J_{s,t},J^{A} _{s,t})_{t\geq s}$ satisfy linear SDEs, from \eqref{2.1}, it is easy to see that
\beq\label{3.4}\beg{split}
J_{T}=J_{t,T}J_{t},\ \  J_{T}^{A}=J^{A} _{t,T}J_{t}^{A}, \ \ J_{t}^{A}=\e^{ At}J_{t},\quad T\geq t\geq 0.
\end{split}\end{equation}
Take
\begin{equation}\label{3.5}
h(t)=\int_{0}^{t}\sigma_{s}^{-1}( X_{s})J_{s}^{A}J_{T}^{-1}v\d s,\quad t\in[0,T].
\end{equation}
From (H1), we see that $J_{t}$ and $J_{t}^{-1}$ are Malliavin differentiable for every $t\in[0,T]$ such that $h\in\D(D^{\ast})$, so that \eqref{1.2} yields
\beq\label{3.6}\beg{split}
\d D_{h} X_{t}&=( A+\nabla  b_{t}( X_{t}))D_{h} X_{t}\d t\\
&+\sum_{k=1}^{\infty}\nabla\sigma^{(k)}_{t}(X_{t})D_{h} X_{t}\d w^{k}_{t}+\sigma_{t}( X_{t})\d h(t),\quad D_{h} X_{0}=0.
\end{split}\end{equation}
Then by Duhamel's formula, we obtain
\begin{equation*}\beg{split}
D_{h} X_{T}=\int_{0}^{T}J_{t,T}^{A}\sigma_{t}( X_{t})\d h(t)&=\int_{0}^{T}J_{t,T}^{A}J_{t}^{A}J_{T}^{-1}v\d t=T\e^{ AT}v.
\end{split}\end{equation*}
Therefore,
\beq\label{3.7}\beg{split}
\mathbb{E}(\nabla_{\e^{ AT}v}f)( X_{T})&=\frac{1}{T}\mathbb{E} (\nabla_{D_{h} X_{T}}f)( X_{T})=\frac{1}{T}\mathbb{E}\{D_{h}f( X_{T})\} =\frac{1}{T}\mathbb{E}\{f( X_{T})D^{*}(h)\}.
\end{split}\end{equation}
Since $h$ is not adpted, to calculate $D^{*}(h)$, let
\begin{equation*}\beg{split}
h_{k}(t)=\int_{0}^{t}\sigma_{s}^{-1}( X_{s})J_{s}^{A}e_{k}\d s, \quad F_{k}=\left\langle J_{T}^{-1}v,e_{k}\right\rangle, \ \ k\geq 1, t\in[0,T].
\end{split}\end{equation*}
Then $h(t)$ can be written as
\begin{equation*}\beg{split}
h(t)=\sum_{k=1}^{\infty}h_{k}(t)F_{k},\quad t\in[0,T].
\end{split}\end{equation*}
Noting that $h_{k}$ is adpted with $\mathbb{E}\int_{0}^{T}|h_{k}^{\prime}(t)|^{2}\d t<\infty$, we have
\begin{equation*}\beg{split}
D^{\ast}(h_{k})=\int_{0}^{T}\left\langle h_{k}^{\prime}(t),\d W_{t}\right\rangle=\int_{0}^{T}\left\langle\left(\sigma_{s}^{\ast}(\sigma_{s}\sigma_{s}^{\ast})^{-1} \right)( X_{s})J_{s}^{A}e_{k}, \d  W_{s}\right\rangle,\ \ k\geq 1.
\end{split}\end{equation*}
Thus, using the formula $D^{\ast}(F_{k}h_{k})=F_{k}D^{\ast}(h_{k})-D_{h_{k}}F_{k}$, we obtain
\beq\label{3.8}\beg{split}
D^{*}(h)&=\sum_{k=1}^{\infty}\{F_{k}D^{\ast}(h_{k})-D_{h_{k}}F_{k}\}\\
&=\left\langle\int_{0}^{T}\left[\left(\sigma_{s}^{\ast}(\sigma_{s}\sigma_{s}^{\ast})^{-1} \right)( X_{s})J_{s}^{A}\right]^{\ast}\d  W_{s},J_{T}^{-1}v\right\rangle-\sum_{k=1}^{\infty}\left\langle D_{h_{k}} J_{T}^{-1}v,e_{k}\right\rangle.
\end{split}\end{equation}
From \eqref{2.1} for $J_{t}:=J_{0,t}$, we have
\begin{equation*}\beg{split}
\d D_{h_{k}}J_{t}&= B_{t}(X_{t})D_{h_{k}}J_{t}\d t+\sum_{j=1}^{\infty}\Sigma^{(j)}_{t}( X_{t})D_{h_{k}}J_{t}\d  w^{j}_{t}\\
&+\left(\nabla_{D_{h_{k}} X_{t}} B_{t}\right)( X_{t})J_{t}\d t+\sum_{j=1}^{\infty}\left(\nabla_{D_{h_{k}} X_{t}}\Sigma^{(j)}_{t}\right)( X_{t})J_{t}\d  w^{j}_{t}\\
&+\sum_{j=1}^{\infty}\Sigma_{t}^{(j)}( X_{t})J_{t}\d h_{k}^{j}(t), \quad D_{h_{k}}J_{0}=0,
\end{split}\end{equation*}
where $h_{k}^{j}:=\langle h_{k},e_{j}\rangle$, $j\geq 1$. By Lemma 3.1, we obtain
\begin{equation}\beg{split}\label{3.9}
D_{h_{k}}J_{T}&=J_{T}\int_{0}^{T}J_{t}^{-1}
\left(\nabla_{D_{h_{k}} X_{t}} B_{t}\right)( X_{t})J_{t}\d t\\
&+J_{T}\int_{0}^{T}J_{t}^{-1}\sum_{j=1}^{\infty}\left(\nabla_{D_{h_{k}} X_{t}} \Sigma^{(j)}_{t}\right)( X_{t})J_{t}\d  w^{j}_{t}\\
&+J_{T}\int_{0}^{T}J_{t}^{-1}\sum_{j=1}^{\infty}\left\langle\left(\sigma_{t}^{\ast}(\sigma_{t}\sigma_{t}^{\ast})^{-1} \right)( X_{t})J_{t}^{A} e_{k},e_j\right\rangle\Sigma_{t}^{(j)}( X_{t})J_{t} \d t\\
&-J_{T}\int_{0}^{T}J_{t}^{-1}\sum_{j=1}^{\infty}\Sigma_{t}^{(j)}( X_{t}) \left(\nabla_{D_{h_{k}} X_{t}}\Sigma^{(j)}_{t}\right)( X_{t})J_{t}\d t.
\end{split}\end{equation}
In addition, it follows from \eqref{3.6} that
\begin{equation}\beg{split}\label{3.10}
D_{h_{k}} X_{t}=tJ_{t}^{A}e_{k}.
\end{split}\end{equation}
Since $0=D_{h_{k}}(J_{T}J_{T}^{-1})=J_{T} D_{h_{k}}J_{T}^{-1}+(D_{h_{k}}J_{T})J_{T}^{-1}$, it means that
\beq\label{3.11}\beg{split}
&D_{h_{k}}J_{T}^{-1}=-J_{T}^{-1}(D_{h_{k}}J_{T})J_{T}^{-1}.
\end{split}\end{equation}
Combining \eqref{3.8}, \eqref{3.9}, \eqref{3.10} and \eqref{3.11}, we get
\begin{equation*}\beg{split}
D^{\ast}(h)&=\left\langle\int_{0}^{T}\left[\sigma_{s}^{-1}( X_{s})J_{s}^{A}\right]^{\ast}\d  W_{s},J_{T}^{-1}v\right\rangle\\
&+\sum_{k=1}^{\infty}\int_{0}^{T}\left\langle J_{t}^{-1}
\left(\nabla_{tJ_{t}^{A}e_{k}} B_{t}\right)( X_{t})J_{t}J_{T}^{-1}v\d t, e_{k}\right\rangle\\
&+\sum_{k=1}^{\infty}\left\langle\left[\int_{0}^{T}J_{t}^{-1}\sum_{j=1}^{\infty}\left(\nabla_{tJ_{t}^{A}e_{k}} \Sigma^{(j)}_{t}\right)( X_{t})J_{t}\d  w^{j}_{t}\right]J_{T}^{-1}v, e_{k}\right\rangle\\
&+\sum_{k=1}^{\infty}\int_{0}^{T}\sum_{j=1}^{\infty}\left\langle\sigma_{t}^{-1}( X_{t})J_{t}^{A} e_{k},e_j\right\rangle\left\langle J_{t}^{-1}\Sigma_{t}^{(j)}( X_{t})J_{t}J_{T}^{-1}v\d t, e_{k}\right\rangle\\
&-\sum_{k=1}^{\infty}\int_{0}^{T}\left\langle J_{t}^{-1}\sum_{j=1}^{\infty}\Sigma_{t}^{(j)}( X_{t}) \left(\nabla_{tJ_{t}^{A}e_{k}}\Sigma^{(j)}_{t}\right)( X_{t})J_{t}J_{T}^{-1}v\d t, e_{k}\right\rangle\\
&=\left\langle\int_{0}^{T}\left[\sigma^{-1}_t(X_{t})J^{A}_{t}\right]^{\ast}\d W_{t},J_{T}^{-1}v\right\rangle\\
&+\int_{0}^{T}\mathrm{Tr}\left\{t J_{t}^{-1}\left[
(\nabla B_{t})(X_{t})\left(J_{t}J_{T}^{-1}v,J^{A}_{t}\right)\right]\right\}\d t\\
&+\sum_{k=1}^{\infty}\left\langle\left[\int_{0}^{T}J_{t}^{-1}\sum_{j=1}^{\infty}\left(\nabla_{tJ_{t}^{A}e_{k}} \Sigma^{(j)}_{t}\right)(X_{t})J_{t}\d w^{j}_{t}\right]J_{T}^{-1}v, e_{k}\right\rangle\\
&+\int_{0}^{T}\mathrm{Tr}\left\{J_{t}^{-1}\left[\Sigma_{t}(X_{t})J_{t}J_{T}^{-1}v\right]\left(\sigma_t^{\ast} (\sigma_{t}\sigma_t^{\ast})^{-1}\right)(X_{t})J^{A}_{t} \right\}\d t\\
&-\int_{0}^{T}\mathrm{Tr}\left\{tJ_{t}^{-1}\sum_{j=1}^{\infty}\Sigma^{(j)}_{t}(X_{t})\left[\left(\nabla \Sigma^{(j)}_{t}\right)(X_{t})\left(J_{t}J_{T}^{-1}v,J^{A}_{t}\right)\right]\right\}\d t
\end{split}\end{equation*}
Substituting this into \eqref{3.7}, we obtain \eqref{2.8}.
\end{proof}
\section{Proof of Corollary \ref{C2.3}}
\begin{proof}[Proof of Corollary 2.3] (1) For simplicity, letting
\beg{equation*}\beg{split}
&\Theta_{1}=\left\langle\int_{0}^{T}\left[\sigma^{-1}_t(X_{t})J_{t}\right]^{\ast}\d W_{t},J_{T}^{-1}v\right\rangle;\\
&\Theta_{2}=\int_{0}^{T}\mathrm{Tr}\left\{t J_{t}^{-1}\left[
(\nabla B_{t})(X_{t})\left(J_{t}J_{T}^{-1}v,J_{t}\right)\right]\right\}\d t;\\
&\Theta_{3}=\sum_{k=1}^{d}\left\langle\left[\int_{0}^{T}J_{t}^{-1}\sum_{j=1}^{d}\left(\nabla_{tJ_{t}e_{k}} \Sigma^{(j)}_{t}\right)(X_{t})J_{t}\d w^{j}_{t}\right]J_{T}^{-1}v, e_{k}\right\rangle;\\
&\Theta_{4}=\int_{0}^{T}\mathrm{Tr}\left\{J_{t}^{-1}\left[\Sigma_{t}(X_{t})J_{t}J_{T}^{-1}v\right]\sigma_t^{-1} (X_{t})J_{t} \right\}\d t;\\
&\Theta_{5}=-\int_{0}^{T}\mathrm{Tr}\left\{tJ_{t}^{-1}\sum_{j=1}^{\infty}\Sigma^{(j)}_{t}(X_{t})\left[\left(\nabla \Sigma^{(j)}_{t}\right)(X_{t})\left(J_{t}J_{T}^{-1}v,J_{t}\right)\right]\right\}\d t,
\end{split}\end{equation*}
then $M^v_T=\sum_{i=1}^5\Theta_i$.

Firstly, by H\"{o}lder inequality, It\^{o} isometric formula, (H2), Lemma \ref{L2.1}, we have
\beg{equation}\beg{split}\label{4.1}
\mathbb{E}|\Theta_{1}|&\leq \left\{\int_{0}^{T}\mathbb{E}\|\sigma^{-1}_t(X_{t})J_{t}\|^2_{\mathrm{HS}}\d t\mathbb{E}\left|J_{T}^{-1}v\right|^2\right\}^{\frac{1}{2}}\\
&\leq \lambda(T)\sqrt{d T} |v|\left(\beta_1(2,T)\beta_2(2,T)\right)^{\frac{1}{2}}.
\end{split}\end{equation}
Next, H\"{o}lder inequality, (H1), Lemma \ref{L2.1} yield that
\beg{equation}\beg{split}\label{4.2}
\mathbb{E}|\Theta_{2}|&\leq d T K_2(T)|v|\int_{0}^{T}\left\{\mathbb{E}\left\|J_{t}\right\|^4\right\}^{\frac{1}{2}} \left\{\mathbb{E}\left\|J_{t}^{-1}\right\|^2\left\|J_{T}^{-1}\right\|^2\right\}^{\frac{1}{2}}\d t\\
&\leq d T K_2(T)|v|\int_{0}^{T}\left\{\mathbb{E}\left\|J_{t}\right\|^4\right\}^{\frac{1}{2}} \left\{\mathbb{E}\left\|J_{t}^{-1}\right\|^4\right\}^{\frac{1}{4}} \left\{\mathbb{E}\left\|J_{T}^{-1}\right\|^4\right\}^{\frac{1}{4}}\d t\\
&\leq d T^2 K_2(T)|v|\left(\beta_1(4,T)\beta_2(4,T)\right)^\frac{1}{2}.
\end{split}\end{equation}
Again by H\"{o}lder inequality, It\^{o} isometric formula, (H1), Lemma \ref{L2.1}, it holds that
\beg{equation}\beg{split}\label{4.3}
\mathbb{E}|\Theta_{3}|&\leq dT K_2(T)|v|\int_{0}^{T}\left\{\mathbb{E}\left(\left\|J_{t}^{-1} \right\|^2\left\|J_{t}\right\|^4\right)\right\}^\frac{1}{2}\left\{\mathbb{E}\left \|J_{T}^{-1}\right\|^2\right\}^\frac{1}{2}\d  t\\
&\leq dT K_2(T)|v|\int_{0}^{T}\left\{\mathbb{E}\left\|J_{t}^{-1} \right\|^4\right\}^\frac{1}{4}\left\{\mathbb{E}\left\|J_{t} \right\|^8\right\}^\frac{1}{4}\left\{\mathbb{E}\left \|J_{T}^{-1}\right\|^2\right\}^\frac{1}{2}\d  t\\
&\leq dT^2 K_2(T)|v|\left(\beta_2(4,T)\beta_1(8,T)\beta_2^2(2,T)\right)^\frac{1}{4}.
\end{split}\end{equation}
Similarly as \eqref{4.2}, it is easy to see that
\beg{equation}\beg{split}\label{4.4}
\mathbb{E}|\Theta_{4}|&\leq d \lambda_2(T)K_1(T)|v|\int_{0}^{T}\left\{\mathbb{E}\left\|J_{t}\right\|^4\right\}^{\frac{1}{2}} \left\{\mathbb{E}\left\|J_{t}^{-1}\right\|^2\left\|J_{T}^{-1}\right\|^2\right\}^{\frac{1}{2}}\d t\\
&d T \lambda_2(T)K_1(T)|v|\left(\beta_1(4,T)\beta_2(4,T)\right)^\frac{1}{2},
\end{split}\end{equation}
and
\beg{equation}\beg{split}\label{4.5}
\mathbb{E}|\Theta_{5}|&\leq d T K_1(T) K_2(T)|v|\int_{0}^{T}\left\{\mathbb{E}\left\|J_{t}\right\|^4\right\}^{\frac{1}{2}} \left\{\mathbb{E}\left\|J_{t}^{-1}\right\|^2\left\|J_{T}^{-1}\right\|^2\right\}^{\frac{1}{2}}\d t\\
&\leq d T^2 K_1(T) K_2(T)|v|\left(\beta_1(4,T)\beta_2(4,T)\right)^\frac{1}{2}.
\end{split}\end{equation}
Combining \eqref{4.1}-\eqref{4.5}, we have
\beg{equation*}\beg{split}
\mathbb{E}|M^v_T|\leq |v|\Gamma_T,
\end{split}\end{equation*}
here, $\Gamma_T$ is in Corollary \ref{C2.3}. According to \eqref{2.8}, it is easy to see that
\beg{equation*}\beg{split}
|P_{T}(\nabla_{v}f)|\leq\frac{\|f\|_{\infty}}{T}\mathbb{E}|M^v_T|\leq |v|\|f\|_{\infty}\frac{\Gamma_T}{T},\ \ f\in C_b^1{\mathbb{R}^d}.
\end{split}\end{equation*}
By \cite[Theorem 2.4(1)]{W} with $H(r) = r$, \eqref{2.10} yields that
\begin{equation*}\beg{split}
\int_{\mathbb{R}^d}|\nabla_v\log p_T(x,\cdot)|(y)p_T(x,y)\d y\leq |v|\frac{\Gamma_T}{T},\ \ x\in\mathbb{R}^d,
\end{split}\end{equation*}
(2) For any $q\geq 2$, by Burkerholder-Davis-Gundy inequality, H\"{o}lder inequality, (H2), Lemma \ref{L2.1}, we have
\beg{equation}\beg{split}\label{4.6}
\mathbb{E}|\Theta_{1}|^q&\leq C(q)\lambda^q(T)T^{\frac{q-1}{2}} |v|^q \left\{\int_{0}^{T}\mathbb{E}\|J_{t}\|^{2q}_{\mathrm{HS}}\d t\right\}^\frac{1}{2}\left\{\mathbb{E}\left|J_{T}^{-1}\right|^{2q}\right\}^\frac{1}{2}\\
&\leq C(q)\lambda^q(T)d^{\frac{q}{2}} T^{\frac{q}{2}} |v|^q\{\beta_1(2q,T)\beta_2(2q,T)\}^{\frac{1}{2}}.
\end{split}\end{equation}
Next, H\"{o}lder inequality, (H1), Lemma \ref{L2.1} yield that
\beg{equation}\beg{split}\label{4.7}
\mathbb{E}|\Theta_{2}|^q&\leq d^{q-1}T^{q-1}d T^q K^q_2(T)|v|^q\int_{0}^{T}\left\{\mathbb{E}\left\|J_{t}\right\|^{4q}\right\}^{\frac{1}{2}} \left\{\mathbb{E}\left\|J_{t}^{-1}\right\|^{2q}\left\|J_{T}^{-1}\right\|^{2q}\right\}^{\frac{1}{2}}\d t\\
&\leq d^{q-1}T^{q-1}d T^q K^q_2(T)|v|^q\int_{0}^{T}\left\{\mathbb{E}\left\|J_{t}\right\|^{4q}\right\}^{\frac{1}{2}} \left\{\mathbb{E}\left\|J_{t}^{-1}\right\|^{4q}\right\}^{\frac{1}{4}} \left\{\mathbb{E}\left\|J_{T}^{-1}\right\|^{q}\right\}^{\frac{1}{4}}\d t\\
&\leq d^q T^{2q} K^q_2(T)|v|^q\left(\beta_1(4q,T)\beta_2(4q,T)\right)^\frac{1}{2}.
\end{split}\end{equation}
Again by Burkerholder-Davis-Gundy inequality, H\"{o}lder inequality, (H1), Lemma \ref{L2.1}, it holds that
\beg{equation}\beg{split}\label{4.8}
\mathbb{E}|\Theta_{3}|^q&\leq C(q)d^{q}  T^{\frac{3q-1}{2}} K_2^q(T)|v|^q\int_{0}^{T}\left\{\mathbb{E}\left(\left\|J_{t}^{-1} \right\|^{2q}\left\|J_{t}\right\|^{4q}\right)\right\}^\frac{1}{2}\left\{\mathbb{E}\left \|J_{T}^{-1}\right\|^{2q}\right\}^\frac{1}{2}\d  t\\
&\leq C(q)d^{q}  T^{\frac{3q-1}{2}} K_2^q(T)|v|^q\int_{0}^{T}\left\{\mathbb{E}\left\|J_{t}^{-1} \right\|^{4q}\right\}^\frac{1}{4}\left\{\mathbb{E}\left\|J_{t} \right\|^{8q}\right\}^\frac{1}{4}\left\{\mathbb{E}\left \|J_{T}^{-1}\right\|^{2q}\right\}^\frac{1}{2}\d  t\\
&\leq C(q)d^{q}  T^{\frac{3q+1}{2}} K_2^q(T)|v|^q\left(\beta_2(4q,T)\beta_1(8q,T)\beta_2^2(2q,T)\right)^\frac{1}{4}.
\end{split}\end{equation}
Similarly as \eqref{4.7}, it is easy to see that
\beg{equation}\beg{split}\label{4.9}
\mathbb{E}|\Theta_{4}|^q&\leq d ^{q-1}T^{q-1} d \lambda^q_2(T)K^q_1(T)|v|^q\int_{0}^{T}\left\{\mathbb{E}\left\|J_{t}\right\|^{4q}\right\}^{\frac{1}{2}} \left\{\mathbb{E}\left\|J_{t}^{-1}\right\|^{2q}\left\|J_{T}^{-1}\right\|^{2q}\right\}^{\frac{1}{2}}\d t\\
&\leq d ^{q}T^{q} \lambda^q_2(T)K^q_1(T)|v|^q\left(\beta_1(4q,T)\beta_2(4q,T)\right)^\frac{1}{2},
\end{split}\end{equation}
and
\beg{equation}\beg{split}\label{4.10}
\mathbb{E}|\Theta_{5}|^q&\leq d ^{q-1}T^{q-1}d K^q_1(T)T^q K^q_2(T)|v|^q\int_{0}^{T}\left\{\mathbb{E}\left\|J_{t}\right\|^{4q}\right\}^{\frac{1}{2}} \left\{\mathbb{E}\left\|J_{t}^{-1}\right\|^{2q}\left\|J_{T}^{-1}\right\|^{2q}\right\}^{\frac{1}{2}}\d t\\
&\leq d ^{q}T^{2q}K^q_1(T) K^q_2(T)|v|^q\left(\beta_1(4q,T)\beta_2(4q,T)\right)^\frac{1}{2}.
\end{split}\end{equation}
Combining \eqref{4.6}-\eqref{4.10}, for any $q\geq 2$, it holds that
\begin{equation}\beg{split}\label{4.11}
\left(\mathbb{E}|M^v_T|^{q}\right)^{\frac{1}{q}}\leq \left\{5^{q-1}\sum_{i=1}^5\mathbb{E}|\Theta_i|^q\right\}^{\frac{1}{q}}|v|\leq \left\{5^{q-1}\Gamma_{T,q}\right\}^{\frac{1}{q}}|v|,
\end{split}\end{equation}
here, $\Gamma_{T,q}$ is in Corollary \ref{C2.3}. On the other hand, Jensen inequality yields that
\begin{equation}\beg{split}\label{4.12}
\left(\mathbb{E}|M^v_T|^{q}\right)^{\frac{1}{q}}\leq \left(\mathbb{E}|M^v_T|^{2}\right)^{\frac{1}{2}}
\end{split}\end{equation}
for any $1<q<2$. Combining \eqref{4.11} and \eqref{4.12}, we obtain for any $q>1$,
\begin{equation}\beg{split}\label{4.13}
\left(\mathbb{E}|M^v_T|^{q}\right)^{\frac{1}{q}}\leq \left\{5^{q\vee 2-1}\Gamma_{T,q\vee2}\right\}^{\frac{1}{q\vee2}}|v|.
\end{split}\end{equation}
It follows from \eqref{2.8}, H\"{o}lder inequality and \eqref{4.13} that for any $p>1$,
\beg{equation*}\beg{split}
|P_{T}(\nabla_v f)|&\leq\frac{1}{T}(P_{T}|f|^p)^{\frac{1}{p}}\left(\mathbb{E}|M^v_T|^{\frac{p}{p-1}}\right)^{\frac{p-1}{p}}\\
&\leq \frac{|v|}{T}(P_{T}|f|^p)^{\frac{1}{p}}\left\{5^{[\frac{p}{p-1}]\vee2-1} \Gamma_{T,[\frac{p}{p-1}]\vee2}\right\}^{[\frac{p-1}{p}]\wedge\frac{1}{2}}, \ \ f\in C_b^1{\mathbb{R}^d}.
\end{split}\end{equation*}
Finally, applying \cite[Theorem 2.4(1)]{W} with $H(r) = r^{\frac{p}{p-1}}$, \eqref{2.10} yields that
\begin{equation*}\beg{split}
\int_{\mathbb{R}^d}|\nabla_v\log p_T(x,\cdot)|^{\frac{p}{p-1}}(y)p_T(x,y)\d y\leq\frac{|v|}{T}\left\{5^{[\frac{p}{p-1}]\vee2-1} \Gamma_{T,[\frac{p}{p-1}]\vee2}\right\}^{[\frac{p-1}{p}]\wedge\frac{1}{2}},\ \ x\in\mathbb{R}^d.
\end{split}\end{equation*}
Thus, the proof is completed.
\end{proof}

\paragraph{Acknowledgement.} The authors would like to thank Profeesor Feng-Yu Wang for  corrections and helpful comments.

\beg{thebibliography}{99}

\bibitem{D}  B. Driver, \emph{Integration by parts for heat kernel measures revisited,} J. Math. Pures
Appl. 76(1997), 703-737.

\bibitem{F} X.-L. Fan, \emph{Integration by parts formula, derivative formula, and transportation
inequalities for SDEs driven by fractional Brownian motion, } Stoch. Anal. Appl. 33(2015), 199-212.

\bibitem{H} Z.-Y. Huang, \emph{Basis for Stochastic Analysis (in Chinese), } Science press, Beijing, 2001.

\bibitem{J} N. Jacob, \emph{Pseudo Differential Operators and Markov Processes (Volume I),} Imperial
College Press, London, 2001.

\bibitem{M} P. Malliavin, \emph{Stochastic analysis, } Springer-Verlag, Berlin, 1997.

\bibitem{N} D. Nualart, \emph{The Malliavin calculus and related topics,} Second Edition, Springer-
Verlag, Berlin, 2005.

\bibitem{W} F.-Y. Wang, \emph{Integration by parts formula and shift Harnack inequality for stochastic
equations,} Ann. Probab. 42(2014), 994-1019.

\bibitem{W1} F.-Y. Wang, \emph{Integration by parts formula and applications for SDEs with L\'{e}vy noise
(in Chinese),} Sci. Sin. Math. 45(2015), 461-470.

\bibitem{W2} F.-Y. Wang, \emph{Integration by Parts Formula and Applications for SPDEs with Jumps,} Stochastics. 88(2016), 737-750.

\bibitem{Wbook} F.-Y. Wang, \emph{Harnack Inequality and Applications for Stochastic Partial Differential Equations,} Springer, New York, 2013.
    
\bibitem{SQZ} S.-Q. Zhang,  \emph{Shift Harnack Inequality and Integration by Part Formula for Semilinear SPDE,} Front. Math. China 11(2016), 461-496.

\end{thebibliography}

\end{document}